\theoremstyle{plain}
\newtheorem{nas}{Corrolary}[section]
\theoremstyle{Remark}
\theoremstyle{Proposition}
\newtheorem{zau}{Remark}[section]
\theoremstyle{definition}
\newtheorem{proposition}{Proposition}[section]
\newcommand{\lp}[1]{\left( \begin{array}{#1} }
\newcommand{\rp}{\end{array} \right)}
\newcommand{\be}{\begin{equation}}
\newcommand{\ee}{\end{equation}}
\begin{document}

\title{On Distributions of One Class of Random Sums and their Applications}

\author
{Ivan Matsak\thanks
{ Department of Operation Research,
Taras Shevchenko National University of Kyiv, Kyiv 01601, Ukraine
idubovetska@gmail.com },
Mikhail Moklyachuk\thanks
{Department of Probability Theory, Statistics and Actuarial
Mathematics, Taras Shevchenko National University of Kyiv, Kyiv 01601, Ukraine, Moklyachuk@gmail.com},
}

\date{\today}

\maketitle

\renewcommand{\abstractname}{Abstract}
 \begin{abstract}
We propose results of the investigation of properties of the random sums of random variables.
We consider the case, where the number of summands is the first moment of an event occurrence.
An integral equation is presented that determines  distributions of random sums.
With the help of the obtained results we analyse the distribution function of the time
during which the Geiger-Muller counter will not lose any particles, the distribution function of the busy period of a redundant system with renewal,
and the distribution function of the sojourn times of a single-server queueing system.
\end{abstract}

\vspace{2ex}
\textbf{Keywords}: {Random sums, queueing, reliability, redundant system, renewal}

\maketitle

\vspace{2ex}
\textbf{ AMS 2010 subject classifications.} Primary: 60E05, 60K25, Secondary: 90B22

\section{Introduction}
 Let  $ \zeta_i,  i \geq 1, $ be independent identically distributed (i.i.d.) random variables,
Let $ \nu $ be a discrete random variable that takes integer positive values. Consider the random sum
\begin{equation} \label{f1}
S _ {\nu} = \sum_{i = 1}^{\nu} \zeta_i.
\end{equation}
In many applied problems of reliability theory, queueing theory, some statistical problems of physics and biology,
there is a need to find the distribution of the random variable $ S_{\nu} $ or at least its main characteristics.
If the random variable $ \nu $ is a Markov moment, then the most famous result here is Wald's identity
\begin{equation}\label{f2}
\mathbf {E} S _ {\nu} = \mathbf{E} \nu \,\mathbf{E} \zeta_1,
  \end{equation}
(see \cite{rch}, \cite{ks}, \cite{fe1}, \cite{kol}, \cite{val}, \cite{gut}, \cite{kalash},  \cite{mik}, \cite{res}
where you can also find a number of related
results and applications).

Similar problems that arise in the mathematical theory of reliability,
were discussed on lectures delivered by B.~V.~Gnedenko at Taras Shevchenko National University of Kyiv, back in
 80-th years of the last century, as well as in a number of his works (see, for example, his preface to the book \cite{kk}).

The case where the random variable $ \nu $ and the sequence $ (\zeta_i) $ are independent, is
studied in details (see \cite{fe1}, \cite{kk}). Unfortunately
in practice quite often the random variable $ \nu $ and the sequence $ (\zeta_i) $ are dependent, which significantly
complicates the problems.

In this paper we consider one important case of such dependence,
where $ \nu $ is the first moment of an event occurrence.
In fact ours analysis is a definite generalization of the methods developed by B.~V.~Gnedenko in his
works \cite{gn1}, \cite{gn2} when studying the problem of reliability of redundant systems with renewal.

\section{Distribution of random sums. Main proposition}
Let $\zeta$ and $\epsilon $ be random variables such that
 \[
\mathbf{P}(\epsilon =1)  = q , \quad \mathbf{P}(\epsilon =0)  = 1- q
, \quad 0 < q < 1,
\]
 \[
\mathbf{P}(\zeta \geq 0) =1, \quad \mathbf{P}(\zeta = 0) < 1.
\]
In general case the random variables $\zeta$ and $\epsilon $ depend on each other.

\noindent Consider the sequence $(\zeta_n, \epsilon_n )$  of independent copies of
$(\zeta, \epsilon )$. Define a random variable $\nu$ in the following way
\[
\nu = \min(n\geq 1 :  \quad \epsilon_{n}= 1).
\]
It is well known, that the random variable $\nu $ has the geometrical distribution
\begin{equation} \label{f3}
\mathbf{P} (\nu = n)= q (1- q )^{n-1} ,  \,  n\geq 1 ,
\end{equation}
and
 \[
 \mathbf{E}\nu =
\frac{1}{q} , \quad  \mathbf{D}\nu = \frac{1-q}{q^2}.
\]

\noindent Consider the random variable $ S_{\nu} $ determined by equality (\ref{f1}).

\noindent We introduce the following notations
\begin{eqnarray}\label{f4}
F_{\zeta, 0} (t) & = & \mathbf{P} (\zeta < t, \, \epsilon =0)
 ,\quad F_{\zeta, 1} (t)  =  \mathbf{P} (\zeta < t, \,  \epsilon =1), \nonumber \\
F_{\zeta} (t) & = & F_{\zeta, 0} (t) + F_{\zeta, 1} (t) =
\mathbf{P}(\zeta < t ) ,\nonumber \\
F_{S} (t) & = & \mathbf{P}( S_{\nu} < t ), \quad P_{S} (t) = 1 -
F_{S} (t) .
\end{eqnarray}
Let
\[
 \psi(z) = \int_0^{\infty} \exp(-zt)dF_{\zeta} (t), \quad \psi_0 (z) = \int_0^{\infty} \exp(-zt)dF_{\zeta, 0} (t),
\]
\[
 \varphi(z) = \int_0^{\infty} \exp(-zt)dF_{S} (t).
\]
be the Laplace transforms of the corresponding distribution functions.

\noindent Introduce the notations
\[
 a =  \mathbf{E}\zeta , \quad  \sigma^2 =  \mathbf{D}\zeta , \quad
a_0 =  \mathbf{E}\zeta I(\epsilon =0) =  \int_0^{\infty} tdF_{\zeta,
0} (t) ,
\]
where   $I(A)$ is the indicator of the event $A$.

\begin{proposition}\label{l1}
(i).  The function  $P_{S} (t) $ satisfies the integral equation
\begin{equation} \label{f5}
P_{S} (t)  = 1-F_{\zeta} (t) +   \int_0^{t} P_{S} (t - x) dF_{\zeta,
0} (x).
\end{equation}

(ii). The Laplace transform  $\varphi(z)$  satisfies the equality
\begin{equation} \label{f6}
\varphi(z) = \frac{ \psi(z) - \psi_0(z)}{ 1 - \psi_0(z)} ;
\end{equation}

(iii).  Under the condition $ \sigma^2 < \infty $ we have the following relations
\begin{equation} \label{f7}
\mathbf{E}S_{\nu}  = \frac{a}{q} ,
\end{equation}
\begin{equation} \label{f8}
 \mathbf{D}S_{\nu}  = \frac{\sigma^2 }{q} + \frac{a^2(q-1)+2a a_0 }{q^2 }.
\end{equation}
\end{proposition}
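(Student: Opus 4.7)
The plan is to establish the three parts in order: derive the integral equation (\ref{f5}) from a first-step decomposition, convert it into the algebraic identity (\ref{f6}) via Laplace--Stieltjes transforms, and then extract the moment formulas (\ref{f7})--(\ref{f8}) by differentiating $\varphi(z)$ at the origin.

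For part (i), I would condition on the first pair $(\zeta_1, \epsilon_1)$ and split the event $\{S_\nu \geq t\}$ according to the value of $\epsilon_1$. On $\{\epsilon_1 = 1\}$ one has $\nu = 1$ and $S_\nu = \zeta_1$, contributing $\mathbf{P}(\zeta_1 \geq t, \epsilon_1 = 1)$. On $\{\epsilon_1 = 0\}$ one has $\nu = 1 + \nu'$, where $\nu'$ is built from the shifted sequence $(\zeta_{k+1}, \epsilon_{k+1})_{k \geq 1}$; this shifted sequence is an i.i.d.\ copy of $(\zeta_k, \epsilon_k)_{k \geq 1}$ and is independent of $(\zeta_1, \epsilon_1)$, so $\sum_{i=2}^\nu \zeta_i$ is distributed as $S_\nu$ and is independent of $\zeta_1$. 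Splitting this second contribution further by comparing $\zeta_1$ with $t$ and integrating gives
\[
\mathbf{P}(S_\nu \geq t,\, \epsilon_1 = 0) = \mathbf{P}(\zeta_1 \geq t,\, \epsilon_1 = 0) + \int_0^t P_S(t-x)\, dF_{\zeta,0}(x).
\]
Adding the two cases collapses $\mathbf{P}(\zeta_1 \geq t, \epsilon_1 = 1) + \mathbf{P}(\zeta_1 \geq t, \epsilon_1 = 0) = 1 - F_\zeta(t)$ and yields (\ref{f5}).

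For part (ii), rewriting (\ref{f5}) in terms of $F_S = 1 - P_S$ produces $F_S(t) = F_{\zeta,1}(t) + \int_0^t F_S(t-x)\, dF_{\zeta,0}(x)$, a linear renewal-type equation whose integral term is a Stieltjes convolution. Applying the Laplace--Stieltjes transform to both sides and using the convolution property yields $\varphi(z) = (\psi(z) - \psi_0(z)) + \varphi(z)\psi_0(z)$, since the transform of $F_{\zeta,1}$ is $\psi - \psi_0$. Solving for $\varphi$ and noting that $\psi_0(0) = 1 - q < 1$ keeps the denominator bounded away from zero near the origin gives (\ref{f6}).

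For part (iii), I would write (\ref{f6}) as $\varphi(z)(1 - \psi_0(z)) = \psi(z) - \psi_0(z)$ and differentiate. Evaluating the first derivative at $z = 0$ with $\varphi(0) = 1$, $1 - \psi_0(0) = q$, and $\psi'(0) = -a$ immediately gives $\varphi'(0) = -a/q$, which is (\ref{f7}). Differentiating once more and using $\psi_0'(0) = -a_0$ together with $\psi''(0) = a^2 + \sigma^2$ produces $\varphi''(0)$, after which $\mathbf{D}S_\nu = \varphi''(0) - \varphi'(0)^2$ simplifies to (\ref{f8}). The main technical point is justifying the differentiation of the Laplace transforms at $z = 0$: the hypothesis $\sigma^2 < \infty$ guarantees that $\psi$, and hence $\psi_0$, admits the required Taylor expansion up to second order at the origin, legitimizing the computation.
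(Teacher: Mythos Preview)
Your proposal is correct and follows essentially the same approach as the paper: a first-step decomposition on $(\zeta_1,\epsilon_1)$ for (i), passage to Laplace transforms using the convolution property for (ii), and differentiation at $z=0$ for (iii). The only cosmetic differences are that the paper organizes the decomposition in (i) by splitting first on $\{\zeta_1\ge t\}$ rather than on $\epsilon_1$, cites Wald's identity directly for \eqref{f7}, and differentiates the quotient form of \eqref{f6} rather than the product form $\varphi(1-\psi_0)=\psi-\psi_0$ --- your route through the product is in fact a bit cleaner computationally.
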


\begin{proof} (i).  Let
\[
A = \{ S_{\nu} \geq t \}, \quad A_1 = \{ \zeta_{1} \geq t \} ,
\]
\[
 \quad A_2 = \{ \zeta_{1} < t , \quad \epsilon_1 =0 , \quad \sum_{i=2}^{\nu}
 \zeta_i \geq t -  \zeta_{1}  \} .
\]
Then
\[
A = A_1 \bigcup A_2  , \quad A_1 \bigcap A_2 = \emptyset,
\]
and
\begin{equation} \label{f9}
\mathbf{P}(A) = \mathbf{P}(A_1 ) + \mathbf{P}(A_2) .
\end{equation}
The first term in equality (\ref{f9}) is quite simple
\begin{equation} \label{f10}
 \mathbf{P}(A_1 ) = 1 - F_{\zeta} (t) .
\end{equation}
Let $\nu^{'}=  \min(n\geq 2 :  \, \epsilon_{n}= 1) $
and we know that event  $\{ \epsilon_1 =0 \}$ occurred. Then
$\nu = \nu^{'}$.
The random variable $ \nu^{'}$ does not depend on the random variables $ \epsilon_1 $ and $ \zeta_1 $.
Therefore, the relation
\[
\mathbf{P}(\sum_{i=2}^{\nu}
 \zeta_i \geq y / \epsilon_1 =0 , \, \zeta_{1} =x ) = \mathbf{P}(\sum_{i=2}^{\nu^{'}}
 \zeta_i \geq y / \epsilon_1 =0 , \, \zeta_{1} =x )
 =  \mathbf{P}(\sum_{i=2}^{\nu^{'}} \zeta_i \geq y )
\]
\[
 =  \mathbf{P}(\sum_{i=1}^{\nu}
 \zeta_i \geq y  ) = P_{S} (y).
\]
holds true and we have
\begin{eqnarray}\label{f11}
 \mathbf{P}(A_2 ) & = & \int_0^t \mathbf{P}(\sum_{i=2}^{\nu}
 \zeta_i \geq t-x / \epsilon_1 =0 , \, \zeta_{1} =x ) dF_{\zeta,0}(x)
 \nonumber \\
 & = & \int_0^t P_{S}(t-x) dF_{\zeta,0}(x)
 .
\end{eqnarray}
Equalities (\ref{f9}) -- (\ref{f11}) together give equality (\ref{f5}).

(ii).  Equality (\ref{f6})  follows from (\ref{f5}), if we use properties of the Laplace transform.
Really, in terms of the Laplace transform equation
 (\ref{f5}) may be written in the form
\[
 \frac{1}{z} - \frac{\varphi(z)}{z} = \frac{1}{z} -  \frac{\psi(z)}{z}+  \left(\frac{\psi_0(z)}{z} - \frac{\varphi(z)\psi_0(z)}{z} \right),
\]
which gives (\ref{f6}) (similar reasoning can be found in \cite{gbc}, p.331 ).

(iii). Equality (\ref{f7}) is a particular case of Wald's identity (see also relation (\ref{f3})).

Now prove (\ref{f8}). Differentiating equation (\ref{f6}) twice we get
\begin{eqnarray*}
\varphi^{''}(z) & = &  \frac{(\psi^{''}(z)- \psi_0^{''}(z)
)(1-\psi_0(z)) +  \psi_0^{''}(z)(\psi(z)- \psi_0(z)) } {(1-
\psi_0(z))^2} \nonumber \\
 & + & \frac{2(1- \psi_0(z))\psi_0^{'}(z)
\left((\psi^{'}(z)- \psi_0^{'}(z))(1-\psi_0(z)) +
\psi_0^{'}(z)(\psi(z)- \psi_0(z)) \right)} {(1- \psi_0(z))^4} .
  \nonumber \\
  \end{eqnarray*}
Take here $z=0$ and use the know equalities
\[
 \psi_0(0)=  \mathbf{P} (\epsilon = 0)= 1- q ,   \quad  \psi^{'}(0) = -
 \mathbf{E}\zeta  = - a , \quad  \psi_0^{'}(0) = - a_0 ,
 \]
we will have
\[
 \varphi^{''}(0)=  \frac{\psi^{''}(0)}{q} +  \frac{2\psi^{'}(0) \psi_0^{'}(0)  }{q^2}
 = \frac{\sigma^2 +a^2 }{q} +  \frac{2a a_0  }{q^2} .
 \]
If we additionally use the relations
\[
\mathbf{E}S_{\nu}^2  =  \varphi^{''}(0) , \quad \mathbf{D}S_{\nu} =
\mathbf{E}S_{\nu}^2  -
 \frac{a^2 }{q^2} ,
 \]
then we  get (\ref{f8}).
\end{proof}

The following Corollary is very useful in applications (see  \cite{gbc}, p.333)

\begin{nas}\label{n1}
Let under the conditions of Proposition \ref{l1} the quantity  $ a =  \mathbf{E}\zeta < \infty$ is fixed and let
\[
q =  \mathbf{P}(\epsilon =1) \rightarrow 0 .
  \]
Then
\begin{equation} \label{f12}
\lim_{q\rightarrow 0} \mathbf{P}(q S_{\nu} < t )= 1-
\exp\left(-\frac{t}{a} \right) .
\end{equation}
\end{nas}

\begin{proof}
If
\[
\psi_{1}(z)=  \psi(z) - \psi_{0}(z) =  \int_0^{\infty}
\exp(-zt)dF_{\zeta, 1} (t) ,
  \]
then
\[
\psi_{1}(0)=  \mathbf{P}(\epsilon =1) =q .
  \]
  From the estimate
\[
 1- \exp(-x) \leq x, \quad x >0
  \]
we get
\[
\psi_{1}(0) - \psi_{1}(qz) =  \int_0^{\infty} (1- \exp(-qzt))\,
dF_{\zeta, 1} (t) \leq qz \int_0^{\infty} t dF_{\zeta, 1} (t)
  \]
\[
\leq qz \left[K   \int_0^{K}  dF_{\zeta, 1} (t) + \int_K^{\infty} t
dF_{\zeta, 1} (t)  \right] \leq qz  \left[K q + \mathbf{E}\zeta
I(\zeta>K) I(\epsilon = 1) \right].
  \]
Inserting   $K= \frac{1}{\sqrt{q}}$, we get
\[
0\leq \psi_{1}(0) - \psi_{1}(qz) \leq   qz (\sqrt{q} +o(1)) .
\]
That is why
\begin{equation} \label{f13}
\lim_{q\rightarrow 0} \frac{\psi_{1}(qz)}{q}= 1
\end{equation}
uniformly on $z \in (0, C), \, \forall C >0$.

Making use of the relations
\[
\mathbf{P}(q S_{\nu} < t ) = F_S \left(\frac{t}{q}\right),  \quad
\int_0^{\infty} \exp(-zt)\, dF_S \left(\frac{t}{q}\right) =\varphi(qz),
  \]
and the convergence of the Laplace transform
(\cite{fe2}, p.431), we come to conclusion that in order to prove equality (\ref{f12}) it is sufficient to  show that
\begin{equation} \label{f14}
\lim_{q\rightarrow 0} \varphi(qz) = \frac{1}{1+az} .
\end{equation}
We have
\begin{equation} \label{f15}
 \varphi(qz) = \frac{\psi(qz)-\psi_0 (qz)}{1-\psi_0 (qz)} =
 \frac{\psi_1 (qz)/q  }{(1-\psi (qz))/q + \psi_1 (qz)/q } .
\end{equation}
From relation
\[
\lim_{q\rightarrow 0} \frac{(1-\psi (qz))}{qz} = -\psi^{'}(0) = a
\]
and relations  (\ref{f13}) and (\ref{f15}), relation (\ref{f14}) follows.
\end{proof}

\begin{zau}\label{z3}
In the recent paper \cite{ok_ik} the limit theorems for
some regenerative processes were proved based on asymptotic relations of the \eqref{f12} type.
The asymptotic relations \eqref{f12} can be applied when investigating
the length of the queue in stochastic networks (see, for example, \cite{kle}, \cite{ged}, \cite{ged2}).
\end{zau}

\begin{nas}\label{n2}
Let $ \tau  $  and $ \eta  $  be independent random variables, let
\[
F(t)= \mathbf{P}(\tau < t) = 1- \exp(-\lambda t), t \geq 0 , \quad
G(t)= \mathbf{P}(\eta < t),
  \]
\[
\epsilon= I(\tau <  \eta) ,  \quad  \mathbf{P}(\epsilon =1)  = q ,
\quad   0 < q < 1.
  \]
  Denote by $ (\tau_i , \eta_i , \epsilon_i )  $ a sequence of independent copies of  $ (\tau,\eta,\epsilon)$,
\[
\nu = \min(n\geq 1 :  \quad \epsilon_{n}= 1),
\]
\[
 S_{\nu}  = \sum_{i=1}^{\nu} \min(\tau_i , \eta_i )
.  \]
Then
\begin{equation} \label{f16}
 P_S (t) = \mathbf{P}( S_{\nu} \geq t )=
\exp(-\lambda {t} ) .
\end{equation}
\end{nas}

\begin{proof}
If we take  $\zeta =\min(\tau ,\eta ),$ then in notations of Proposition \ref{l1} we have
\[
1- F_{\zeta}(t) = \mathbf{P}(\min(\tau , \eta  ) \geq t) =
\exp(-\lambda t) (1- G(t)),
\]
\[
 F_{\zeta , 0}(t) = \mathbf{P}(\min(\tau , \eta  ) < t , \tau \geq
\eta ) = \int_0^t \exp(-\lambda x) dG(x).
\]
Substituting these functions into equation
(\ref{f5}) we get
\begin{equation} \label{f17}
P_{S} (t)  = \exp(-\lambda t)(1-G(t)) +   \int_0^{t} P_{S} (t - x)
\exp(-\lambda x) dG(x).
\end{equation}
By Proposition \ref{l1} the Laplace transform of the function $F_S (t)= 1-  P_{S}(t) $
is represented by formula (\ref{f6}).
We have to find the functions  $\psi(z)$ and $\psi_0 (z) $.

\noindent Take $  \phi(z) = \int_0^{\infty} \exp(-zt)dG(t).$
Then
\[
\psi_0 (z)  = \int_0^{\infty} \exp(-zt) \exp(-\lambda t) dG(t) =
\phi(z+\lambda),
\]
and
\[
\psi (z)  = \int_0^{\infty} \exp(-zt)  d (-\exp(-\lambda t)(1-G(t))
\]
\[
= \lambda  \int_0^{\infty} \exp(-(z+\lambda )t) (1-G(t))dt +
            \int_0^{\infty}  \exp(-(z+\lambda )t) dG(t)
\]
\[
= \frac{\lambda }{z+\lambda} - \lambda \int_0^{\infty}
\exp(-(z+\lambda )t) G(t)dt + \phi(z+\lambda)
\]
\[
= \frac{\lambda }{z+\lambda}(1- \phi(z+\lambda)) +\phi(z+\lambda) .
\]
Making use of the functions $\psi(z)$ and $\psi_0 (z)$ from the formula (\ref{f6}), we get
\[
\varphi(z)  = \frac{\lambda }{z+\lambda} .
\]
This means that equality (\ref{f16}) is correct.

It is easy to make sure that substitution $P_{S} (t)=\exp(-\lambda t) $ in equation (\ref{f17}) converts it into identity.
\end{proof}

\begin{zau}\label{z1}
If under  conditions of Proposition \ref{l1} the random variables $\epsilon $ and  $\zeta $ are independent, then
\[
F_{\zeta, 0} (t) = (1- q)F_{\zeta} (t) ,    \quad     \psi_0(z)= (1-
q)\psi(z)
\]
and equality (\ref{f6}) is of the form
\begin{equation} \label{f18}
\varphi(z) = \frac{ q \psi(z) }{ 1 - (1-q)\psi(z)} .
\end{equation}
Ofcourse, under this condition, equality (\ref{f18}) can be deduced directly from the definition.

\noindent If, additionally, the random variable $\zeta $ has the exponential distribution,
$F_{\zeta}(t) = 1- \exp(-\lambda t),\ t\ge0 $, then
  $$ \psi(z)= \frac{\lambda}{z+\lambda}, \quad  \varphi(z) = \frac{ q \lambda }{ z + q\lambda} .$$
The last equality means that the random variable  $S_{\nu} $ has the exponential distribution
with parameter $\lambda q$, $$F_S (t) = 1- \exp(-\lambda q t),\ t\ge0.$$
\end{zau}

\begin{zau}\label{z2}
For independent random variables
 ${\nu} $ and  $\zeta $  in the book {\rm{\cite{bor2}}}
the following formula is proposed
\begin{equation} \label{f19}
 \mathbf{D}S_{\nu}  = \mathbf{D}\zeta \mathbf{E}\nu +  (\mathbf{E}\zeta)^2 \mathbf{D}\nu .
\end{equation}
Unfortunately, as follows from Proposition \ref{l1}, for arbitrary Markov moments this simple formula (\ref{f19}) is incorrect.
In general case similar formulas are known only for the values
$\mathbf{E}(S_{\nu} -  \nu \mathbf{E}\zeta )^2 $ (see books {\rm{\cite{bor2}, \cite{kol}}})
\end{zau}

\section{Applications}

\subsection{Application 1. Geiger-Muller counter of type 1.}
Such a counter is used to calculate cosmic particles which
have arrived in some area of space. It acts as follows
(\cite{smith}, p.273,  \cite{fe2}, p.372).
A particle reaching the counter when it is free is registered but locks the counter for a random time
 $ \eta$. Particles reaching the counter during the locked period are not registered.
Denote by $ T $ the time
during which the Geiger-Muller counter will not lose any particles, and let ${P}_{T}(t) = \mathbf{P}(T> t) $.

  To simplify recordings, we assume that at the moment $ t_0 =0 $
a particle reaching the counter, and $ t_1,  t_2, \ldots,  t_k, \ldots $ are the
next moments of particle reaching, and denote $ \tau_k = t_k -
t_{k-1}.  $

Let $ \eta_k  $ be the discharge time of the particle reaching the counter at the moment
 $ t_{k-1}$,  and let $(\tau_k ) $ and $(\eta_k ) $ be independent random sequences,
\[
\mathbf{P}(\tau_k < t) =F(t) , \quad   \mathbf{P}(\eta_k < t) =G(t)
, \quad F(+0)=0,  \quad G(+0)=0 ,
  \]
and
\[
\epsilon_k = I(\tau_k <  \eta_k ) , k \geq 1 , \quad  \nu = \min(k
\geq 1 : \quad \epsilon_{k}= 1).
 \]
Then it is clear that
\begin{equation} \label{f20}
 T  = \sum_{i=1}^{\nu} \tau_i .
\end{equation}
In order to apply Proposition \ref{l1} here, we take
\[
\zeta_i = \tau_i ,\quad i \geq 1 , \quad  F_{\zeta} (t) = F(t) ,
 \]
\[
 F_{\zeta , 0} (t)= \mathbf{P}(\tau_1 < t , \tau_1 > \eta_1 ) = \int_0^t  G(x)
 dF(x) ,
 \]
\[
 q= \mathbf{P}(\tau_1 <  \eta_1 ) = \int_0^{\infty} F(t)
 dG(t) .
 \]
 The equality (\ref{f5}) in this case is of the form
\begin{equation} \label{f21}
P_{T} (t)  = 1-F (t) +   \int_0^{t} P_{T} (t - x)G(x) dF(x).
\end{equation}
The Laplace transform $\varphi(z)$ for distribution of the random variable $ T $ is determined by formula
(\ref{f6}), in which
\[
 \psi(z) = \int_0^{\infty} \exp(-zt)dF(t), \quad \psi_0 (z) = \int_0^{\infty} \exp(-zt)
 G(t)dF(t) .
\]
The mean value of $ \mathbf{E} T $ and the variance
    $\mathbf{D }T $ can be calculated by formulas
(\ref{f7}) and (\ref{f8}).

In the most important case when there is a Poisson flow of
particles with parameter $\lambda$,   $$F(t) = 1- \exp(-\lambda
t) , t\geq 0 ,$$ we have
\[
 \psi(z) = \frac{\lambda}{z+\lambda} , \quad \psi_0 (z) =\frac{\lambda
 \phi(z+\lambda)}{z+ \lambda}
 , \quad \varphi(z) =\frac{\lambda (1-
 \phi(z+\lambda))}{z+ \lambda (1-
 \phi(z+\lambda))}  ,
\]
where
\[
 \phi(z) = \int_0^{\infty} \exp(-zt)dG(t) .
\]
It follows from (\ref{f3}),  (\ref{f7}) and (\ref{f8}) that
\[
\mathbf{E }T = \frac{1}{\lambda q} , \quad \mathbf{D }T =
\frac{2(q+\lambda a_0 )-1}{\lambda^2 q^2},
\]
where
\[
q= 1-\phi(\lambda), \quad  a_0 =\lambda \int_0^{\infty} t
\exp(-\lambda t)G(t) dt.
\]

\subsection{Application 2. Redundant system with renewal.}

The problem of reliability of redundant systems with renewal is devoted to a fairly large number of works,
among which we note the works  \cite{gbc},  \cite{kor}, \cite{kov1}.
Here we  consider a simple renewal case, namely, duplication, when each operating unit is associated with a single standby unit, which
replaces the operating unit in case the latter fails.
The element that failed is repaired, and after repair its characteristics equivalent to the original characteristics.
Suppose that there is one repair unit in the system.
In addition, we consider that the standby unit is in a partially energized state until it is connected instead of the primary unit (light standby).
During the period it is in standby, it can fail but the probability of this is less than the same probability
for the basic unit ( \cite{gbc}, p. 324-330). This is redundant system of type $(iii/2/1) $ in terms of the paper \cite{doma}.
Denote by $\xi(t)$ the total number of defective units in the system
at a moment $t$. We assume that $\xi(0)=0 $ almost surely and
say that the system is in a state $k $ at a
moment $t$, if $\xi (t)=k, k=0, 1, 2 .$
A busy period is understood as a continuous period when the
system is functioning properly (at least one element is functioning). Every busy period is followed by an idle period when both
units fail.

We will assume that the failure-free time of the primary unit $\tau$,
and the failure-free time of the standby unit $\tau'$
have exponential distributions
\begin{equation}
\label{f22}
 \mathbf{P}(\tau<t)=1-e^{-\lambda t},\quad
\mathbf{P}(\tau'<t)=1-e^{-\lambda' t},\quad t\ge0,
\end{equation}
while the renewal time (repairing
time) $\eta $ has an arbitrary distribution
$G(t)=\mathbf{P}(\eta < t),\quad G(0+)=0.$

We suppose that $\tau $,  $\tau'$ and $\eta $ are independent random variables.
Let
\[
q= \mathbf{P}(\tau < \eta) =1- \int_0^{\infty}e^{-\lambda x} dG(x) .
\]
Denote by $W_k $ the k-th busy period.
When the system is in the state $2$ then the system fails,
so $W_1$ is the time to first fail.
For  $k$-th busy period we have simple formulas for the first moments
 (\cite{gbc},  \cite{doma} )
\[
\mathbf{E}W_1= \frac{1}{\lambda } +\frac{1}{(\lambda+ \lambda^{'})
q} \quad \text{òà}\quad
\mathbf{E}W_k=\frac{\lambda+q\lambda'}{\lambda(\lambda+\lambda')q} ,
\quad k\geq 2.
\]
Based on Proposition \ref{l1} we can deduce exact formulas for calculation the variation $\mathbf{D}W_k $.
Really, let   $(\tau_i,\tau'_i,\eta_i )$ be independent copies of $(\tau, \tau',\eta)$, let
\[
\epsilon_i = I(\tau_i <  \eta_i ) , i \geq 1 , \quad  \nu = \min(i
\geq 1 : \quad \epsilon_{i}= 1).
 \]
In the paper \cite{doma} we can find the representation
\begin{equation} \label{f23}
W_1  \stackrel{d} = \sum_{i=1}^{\nu} \zeta_i ,
  \end{equation}
where $\xi_1 \stackrel{d} = \xi_2 $ means equality of distributions of the random variables $\xi_1 $ and  $\xi_2 $.

\noindent In this formula
  $\zeta_i = \min(\tau_i, \eta_i)
+ \tilde{\tau}_i$, random variables  $\tilde{\tau}_i$ do not depend on $\tau_i $ and $ \eta_i $, and so do not depend on $\nu$,
$\mathbf{P}(\tilde{\tau}_i < t) =1- e^{-(\lambda + \lambda^{'}) t},
t\geq 0 .$

\noindent Then by formula (\ref{f8})
\[
 \mathbf{D}W_1   = \frac{\sigma^2 }{q} + \frac{a^2(q-1)+2a a_0 }{q^2
 } ,
 \]
where
\[
a =\mathbf{E}\min(\tau , \eta) +\mathbf{E}\tilde{\tau}=
 \frac{q}{\lambda} + \frac{1}{(\lambda +\lambda^{'})},
  \]
\[
 \sigma^2 =\mathbf{D}\min(\tau , \eta) +\mathbf{D}\tilde{\tau}= \mathbf{E}\min(\tau ,
 \eta)^2 - \frac{q^2}{\lambda^2} + \frac{1}{(\lambda +\lambda^{'})^2}
  \]
\[
 = \frac{2- q^2}{\lambda^2} + \frac{1}{(\lambda +\lambda^{'})^2}
 -2 \int_0^{\infty} t e^{-\lambda  t} G(t) dt,
  \]
\[
a_0 =\mathbf{E}[\min(\tau , \eta) +\tilde{\tau}]I(\tau >  \eta )=
\int_0^{\infty} t e^{-\lambda  t} dG(t) +
 \frac{1-q}{(\lambda +\lambda^{'})}.
  \]
For   $k\geq 2$
\[
 W_1   \stackrel{d} =  W_k + \tilde{\tau},
 \]
(see \cite{doma}), moreover $ W_k $, $ \tilde{\tau}$ are independent. That is why
\[
 \mathbf{D}W_k   = \mathbf{D}W_1 - \frac{1}{(\lambda
 +\lambda^{'})^2}.
 \]
\noindent  B.~V.~Gnedenko \cite{gn1}, \cite{gn2}
found an integral equation  which the function  $\mathbf{P}(W_1 > t )$ satisfies,
 and derived a formula for the Laplace transform of this function.
 Unfortunately, under the general assumptions concerning the distribution $ G (t) $, the explicit form of a function
$\mathbf{P}(W_1 > t )$ is not known.
In the case $ G(t)= 1-e^{-\mu t}, t \geq 0 $, the function $\mathbf{P}(W_1 > t )$ is known (see, for example, \cite{gk}),
but has a rather cumbersome form.
 Therefore, the following equations seem rather unexpected.

Consider the first busy period and denote by
$\alpha_0 $ and $\alpha_1 $  times of stay of the system  in the states
$0 $ and $1 $ correspondingly.
 Therefore
\[
 W_1   =  \alpha_0 + \alpha_1,
 \]
moreover (see \cite{doma})
\[
  \alpha_0   \stackrel{d} = \sum_{i=1}^{\nu} \tilde{\tau}_i , \quad
  \alpha_1 \stackrel{d} = \sum_{i=1}^{\nu} \min(\tau_i , \eta_i ).
 \]
 These representations together with the Corollary \ref{n2} of Proposition \ref{l1} and Remark \ref{z1}
 give the following equalities:
\[
\mathbf{P}( \alpha_0 > t ) = e^{-(\lambda + \lambda^{'})q t}, \quad
\mathbf{P}( \alpha_1 > t ) = e^{-\lambda  t}, \quad t\ge0.
\]
It is clear that random variables $\alpha_0 $ and $\alpha_1 $ are
strongly dependent, which does not allow to find a simple formula for distribution
 $ \mathbf{P}(W_1 > t )$.

\subsection{Application 3. Single-server queueing system}

Consider a single-server queueing system (SSQS)
on the interval $[0, \infty) $.
Let $t_0 =0, t_1 ,\ldots, t_n, \ldots $ be the customer random arrival times.
The arrived customer starts to be served, if the service channel is not occupied.
Otherwise, the customer joins a queue whose size is not limited.
The channel serves the $n$-th customer during a random time  $\eta_n.$
 After the end of the service, the channel or takes the next customer, if the queue is not empty, or is waiting for a new customer.

Suppose that $\tau_{n-1} =t_n - t_{n-1},\, n\geq 1, $ are independent identically distributed random variables with the distribution function
$F(t)=\mathbf{P}(\tau_n < t)= 1 -e^{-\lambda t},  t\ge0. $
It means that there is a Poisson frow of customer arrives to SSQS system (see  \cite{gk}).
Let $\eta_n, n\geq 1, $ be independent identically distributed random variables with the distribution function
$G(t)=\mathbf{P}(\eta_n < t),\, G(+0)=0 .$

Denote by $\xi(t)$ the number of customers in the SSQS system at a moment of time $t$, $\xi(0)=0$ a.s.
We say that at a moment  $t$ SSQS system is in the state $k$, if  $\xi (t)=k.$

We define the regeneration moments $S_{k}$ for the process $\xi(t)$ in the following way.

 Let $S_0 = t_1 $ be the moment when the first customer arrives.
 Let $S_1$ be the first transition moment from the state $2$ to the state $1$.

For $k >1 $ the beginning of $k$-the regeneration cycle $S_{k-1}$ we
take  ${k-1}$-th  transition moment from the state $2$ to the state $1$ while its end $S_{k}$ is ${k}$-th such a moment.

Denote, as in the previous application,
\[
\epsilon_i = I(\tau_i <  \eta_i ), i \geq 1, \quad  \nu = \min(i
\geq 1 : \quad \epsilon_{i}= 1).
 \]

 Figure 1 demonstrates the first regeneration cycle.

\begin{center}
\begin{picture}(370,60)(-10,-30)
\put(0,0){\line(1,0){350}}

\put(0,-3){\line(0,1){6}} \put(30,-5){\line(0,1){10}}
\put(60,-3){\line(0,1){6}} \put(90,-3){\line(0,1){6}}
\put(120,-3){\line(0,1){6}} \put(150,-3){\line(0,1){6}}
\put(200,-3){\line(0,1){6}} \put(230,-3){\line(0,1){6}}
\put(260,-3){\line(0,1){6}} \put(290,-3){\line(0,1){6}}
\put(320,-5){\line(0,1){10}}

\qbezier(0,0)(15,25)(30,0)
\qbezier(30,0)(40,13)(60,13)\multiput(60,12)(0,-3){3}{\line(0,-1){1}}
\qbezier(60,0)(75,25)(90,0)
\qbezier(90,0)(100,13)(120,13)\multiput(120,12)(0,-3){3}{\line(0,-1){1}}
\qbezier(120,0)(135,25)(150,0)

\qbezier(200,0)(210,13)(230,13)\multiput(230,12)(0,-3){3}{\line(0,-1){1}}
\qbezier(230,0)(245,25)(260,0) \qbezier(260,0)(275,25)(290,0)

\put(170,5){\ldots}  \put(300,5){\ldots} \put(335,5){\ldots}

\qbezier(30,0)(45,-25)(60,0) \qbezier(90,0)(105,-25)(120,0)
\qbezier(200,0)(215,-25)(230,0) \qbezier(260,0)(290,-22)(295, -10)

\put(12,20){${\tau}_0$} \put(42,20){$\tau_1$}
\put(72,20){$\widetilde{\tau}_1$} \put(102,20){$\tau_2$}
\put(132,20){$\widetilde{\tau}_2$}

\put(212,20){$\tau_{\nu-1}$}
\put(242,20){$\widetilde{\tau}_{\nu-1}$} \put(272,20){$\tau_\nu$}

\put(-7,-30){t=0} \put(25,-30){$S_0$} \put(43,-25){$\eta_1$}
\put(103,-25){$\eta_2$}\put(213,-25){$\eta_{\nu-1}$}\put(288,-25){$\eta_\nu$}
\put(317,-30){$S_1$}
\end{picture}
\end{center}
\begin{center}
Figure 1.
\end{center}

The random variables $\widetilde{\tau}_i $ on Figure 1 do not depend on  $\tau_i  $ and
$\eta_i $,  and $\mathbf{P}(\widetilde{\tau}_i <t) =1- e^{-\lambda
t}$.

We describe the work of the SSQS in the first period of regeneration.
Let $T = S_1 - S_0 $ be its duration, let $S_0 + \alpha $ be the first
transition moment to the state $2$.
On the interval  $ [S_0 , S_0 + \alpha ) $ the system can only be in states $0$ or $1$, the queue is absent.
 Next is the interval $ [ S_0 + \alpha , S_0 + \alpha +\beta)$ of length $\beta$, on which the queue is always $\geq 1 $,
 \begin{equation} \label{f24}
T = \alpha+ \beta  .
 \end{equation}
We find some basic characteristics of the random variables $\alpha $ and $\beta$.

\noindent Denote by
 ${\alpha}_0$  and    ${\alpha}_1$ be sojourn times in states $0$ and   $1$ on $[S_0,S_1)$.
 It is clear that
\begin{equation} \label{f25}
\alpha =\alpha_0 + \alpha_1 .
  \end{equation}
 We can find distributions of random variables  ${\alpha}_0$ and   ${\alpha}_1$ based on results of Proposition \ref{l1}
\begin{eqnarray}\label{f26}
&&\nonumber \mathbf{P}(\alpha_0 > t) =(1-q) e^{-\lambda qt}, \, t>0
 ,\\
&&\mathbf{P}(\alpha_0 =0) =q .
\end{eqnarray}
\begin{eqnarray}\label{f27}
\mathbf{P}(\alpha_1 > t) = e^{-\lambda t}, \, t>0 .
\end{eqnarray}
Proof of equality (\ref{f26}). We can conclude from Figure 1 that
\[
 {\alpha}_0
=\sum_{i=1}^{\nu-1}\widetilde{\tau}_i .
\]
As we noted before the random variable  $\nu $ and the sequence $(\widetilde{\tau}_i )$ are independent, $\widetilde{\tau}_i $
have the exponential distribution with parameter $\lambda$, that is why
\[
\mathbf{E}\exp(-z\alpha_0 ) =\sum_{k=1}^{\infty}q (1-q)^{k-1}
 [\mathbf{E}\exp(-z\widetilde{\tau}_1)]^{k-1} =\sum_{k=1}^{\infty}q (\frac{\lambda(1-q)}{z+\lambda})^{k-1}
 = \frac{q(z+\lambda)}{z+q\lambda}.
\]
It is also not difficult to check that the distribution function
\begin{eqnarray}
  H(t) &= & \left \{
 \begin{array}{rl}
 0, & \mbox{ïðè} \quad t < 0 , \\
1- (1-q) \exp (-\lambda qt), & \mbox{ïðè} \quad  t> 0 ,  \,
  \end{array} \right.  \nonumber
\end{eqnarray}
satisfies the equality
 \[
  \int_0^{\infty} \exp(-zt)dH(t) = \frac{q(z+\lambda)}{z+q\lambda}.
 \]
This means that equality  (\ref{f26}) is correct.

\noindent  Let's move to equality (\ref{f27}). It follows from Figure 1
that
\begin{eqnarray}
\quad {\alpha}_1 =\sum_{i=1}^\nu\min(\tau_i,\eta_i). \nonumber
\end{eqnarray}
So (\ref{f27}) follows from Corollary \ref{n1} of Proposition \ref{l1}.

In the following we will assume that
\[
 b= \mathbf{E}\eta_i < \infty , \quad   \rho = \lambda b < 1 .
 \]
It is known \cite{gk} that in this case there exist stationary probabilities of states
\[
\lim_{t \to\infty} \mathbf{P}(\xi(t) = i) = p_i ,  \quad i = 0, 1,
2, \ldots ,
 \]
and
\begin{equation}\label{f28}
 p_0 = 1- \rho .
\end{equation}
From (\ref{f26}), (\ref{f27}) we get
\begin{equation}\label{f29}
 \mathbf{E}\alpha_0 =   \frac{1-q }{\lambda q} ,  \quad  \mathbf{E}\alpha_1 =   \frac{1}{\lambda }.
\end{equation}
We need also equalities
\begin{equation}\label{f30}
 p_0 =   \frac{\mathbf{E}\alpha_0 }{\mathbf{E}T } ,  \quad  p_1 =   \frac{\mathbf{E}\alpha_1 }{\mathbf{E}T
 },
\end{equation}
(see \cite{doma}).

\noindent From equalities  (\ref{f24}), (\ref{f28}) -- (\ref{f30}) it follows that
\[
\mathbf{E}\alpha = \mathbf{E}\alpha_0 + \mathbf{E}\alpha_1
=\frac{1}{\lambda q},
 \]
\[
\mathbf{E}T = \frac{\mathbf{E}\alpha_0}{p_0} =\frac{1-q }{\lambda
q(1-\rho)},
 \]
\[
\mathbf{E}\beta =\frac{\rho -q }{\lambda q(1-\rho)}.
 \]
We also note that from the derived equations the following simple formula  follows
\[
p_1 = \frac{q(1-\rho)}{1-q}.
 \]

\section{Conclusions}

We propose results of the investigation of properties of the random sums of random variables under the condition that the random summands (i.i.d. random variables) are not independent
on the (random) number of summands.
We consider the case, where the number of summands is the first moment of an event occurrence.
We propose integral equation and some relations that determine distributions of random sums, their Laplace transforms and main characteristics (first and second moments).
Some applications of the obtained results are described. We, first, analyse the distribution function of the time
during which the Geiger-Muller counter of type 1 does not lose any particles.
Then, the distribution function and its main characteristics of the busy period of a redundant system with renewal is nalysed.
And, third, the distribution function of the sojourn times of a single-server queueing system is analysed.

\end{document}